\newtheorem{theorem}{Theorem}[section]
\newtheorem{lemma}[theorem]{Lemma}
\newtheorem{corollary}[theorem]{Corollary}
\newtheorem{proposition}[theorem]{Proposition}
\newtheorem{example}[theorem]{Example}
\newtheorem*{keywords}{Keywords}
	\let\Item\item
\def\address#1{\expandafter\def\expandafter\@aabuffer\expandafter
	{\@aabuffer{\affiliationfont{#1}}\relax\par
	\vspace*{13pt}}}
\numberwithin{equation}{section}
\begin{document}

\date{}

\title{On Krull dimension of modules over group rings of minimax abelian groups}

\author{ Anatolii V. Tushev 
\thanks{The author has received funding through the MSCA4Ukraine project, which is funded by the European Union (ID 1232926)} \\
        Justus Liebig University Giessen\\
        Giessen 35390, Germany;\\
        {\it E-mail: Anatolii.Tushev@math.uni-giessen.de}\\
       }

\maketitle

\begin{abstract}
In the paper we obtained some estimations of Krull dimension of modules over group rings of minimax abelian groups. We also consider relations between the condition of existing of small deviation for normal subgroups and some previously studied chain conditions. 

 \begin{keywords}
minimax groups, Krull dimension, group rings
  \\
 {\it 2010 AMS Subject Classification.} 16S34 , 20C07 , 11R27
\end{keywords}
\end{abstract}

\section{Introduction}\label{section1}

A group $G$ is said to have finite (Prufer) rank if there is a positive integer $m$ such that any finitely generated subgroup of $G$ may be generated by $m$ elements; the smallest $m$ with this property is the rank $r(G)$ of $G$. \par
Let $\Omega $ be a set of subgroups of a group $G$ then $\Omega $ is a partially ordered set (poset for short) with respect to inclusion. The group $G$ meets the minimal (maximal) condition $Min - \Omega $  ($Max - \Omega $) for subgroups from $\Omega $ if $G$ has no infinite descending (ascending) chain of subgroups from $\Omega $. The group $G$ meets the weak minimal (maximal) condition $Min - \infty  - \Omega $ ($Max - \infty  - \Omega $) for subgroups from $\Omega $ if $\left| {{H_i}:{H_{i + 1}}} \right| < \infty $ ($\left| {{H_{i + 1}}:{H_i}} \right| < \infty $) almost for all $i$ for any infinite descending (ascending) chain $\{ {H_i}\} $ of subgroups from $\Omega $. \par
By a Chernikov result, a soluble group $G$ meets $Min$ if and only if $G$ is a finite extension of a direct product of finitely many of quasicyclic groups; now, such groups are called Chernikov. A soluble group $G$ meets $Max$ if and only if it is polycyclic, that is $G$ has a finite subnormal series with cyclic factors. \par
A group $G$ is said to be minimax if it has a finite subnormal series each of whose factor meets either $Min$ or $Max.$ It follows from the mentioned above description of soluble groups with $Min$ and $Max$ that a soluble group $G$ is minimax if and only if it has a finite subnormal series each of whose factor is either cyclic or quasicyclic. A locally soluble group $G$ meets $Min - \infty $ or $Max - \infty $ if and only if $G$ is a soluble minimax group. \par
Let $\Omega $ be a poset. For any elements $a,b \in \Omega $ we put $a/b = \{ x \in \Omega |a \le x \le b\} $. The deviation of $\Omega $ can be defined as the following: \par
1. if $\Omega $ is trivial then $dev\Omega  =  - 1$ \par
2. if $\Omega $ meets the minimal condition then $dev\Omega  = 0$ \par
3. for a general ordinal $\beta $, $dev\Omega  = \beta $ if : \par
( i ) $dev\Omega  \ne \alpha  < \beta $,\par
( ii ) in any descending chain $\{ {a_i}\} $ of elements of $\Omega $ all but finitely many factors ${a_i}/{a_{i + 1}}$ have deviation less than $\beta $. \par
We should note that a poset has deviation if and only if it contains no sub-poset order isomorphic to the poset  of rational numbers in their usual ordering (see \cite[Introduction]{DaMa2024}).\par
Let $R$ be a ring. If $\Omega $ is the set of all submodules of an $R - $module $M$ then $dev\Omega $ is called Krull dimension of $M$ which is denoted by ${K_R}(M)$. The Krull dimension of a right module ${R_R}$ is called the Krull dimension of the ring $R$ and denoted by $K(R)$. In \cite{Tush2003} it was proposed to consider groups with deviation in posets of subgroups. If $G$ is a group and $\Omega $ is a poset of subgroups of $G$ then $de{v_\Omega }G$ denotes the deviation of $\Omega $. If $\Omega $ is the poset of all subgroups of $G$ then we put $de{v_\Omega }G = devG$. The arguments of the proof of \cite[Proposition 6.1.8 ]{McRo1988} show that if a group $G$ meets $Max - \infty  - \Omega $ then $de{v_\Omega }G$ exists. Thus, the condition of existence of $de{v_\Omega }G$ generalizes both $Min - \infty  - \Omega $ and $Max - \infty  - \Omega $. However, 
\cite[Lemma 4.4]{Tush2003} shows that a soluble group has $devG$ if and only if $G$ is minimax. 
The condition of existence of $de{v_\Omega }G$ was considered for various posets $\Omega $ of 
subgroups (see \cite{GKR1, GKR2, KuSm2009}). \par
If $\Omega $ is the poset of all normal subgroups of a group $G$ then we call $de{v_\Omega }G$ by Krull dimension of $G$ and denote it by $K(G)$. Evidently, if a group $G$ meets $Min - \infty  - \Omega $ then $de{v_\Omega }G \le 1$. A group $G$ is said to be $G$-minimax if it has a finite series of normal subgroups each of whose factor satisfies either minimal or maximal condition for $G$-invariant subgroups. It was proved in \cite[Theorem 4.5]{Tush2003} that a metabelian group $G$ has Krull dimension if and only if it is $G$-minimax. \par
According to \cite{DaMa2024} a poset of subgroups of a group has the real chain condition ($RCC$) if it does not contain chains whose order type is the same as the set of real numbers  with their usual ordering. It is easy to note that if a poset $\Omega $ of subgroups of a group $G$ has $de{v_\Omega }G$ then $\Omega $ meets $RCC$. However, unlike other chain conditions, existence of deviation of a poset $\Omega $ of subgroups of a group $G$ gives us a number characteristic $di{v_\Omega }G$ of the group $G$ (in the general case the number $di{v_\Omega }G$ may be transfinite). So, one can study properties of the group $di{v_\Omega }G$ which depend on the value of $di{v_\Omega }G$. \par
In Section 2 we obtained some estimations of Krull dimension of modules over integer group rings of minimax abelian groups (Theoren 2.4). Our estimations are based on techniques developed in \cite{Tush2002, Tush2003}. Then we apply our results on modules over integer group rings for studying metabelian groups with $K(G) \le 1$. If $G$ is a metabelian group and $K(G) = 0$ then $G$ satisfies the condition $Min - N$ of minimality for normal subgroup, such groups were studied in \cite{HaMc1971}. \par
In Section 3 we study properties of metabelian groups with $K(G) \le 1$. In particular, we consider relations between the condition $K(G) \le 1$ and the condition no-${\mathbb{Z}} \wr {\mathbb{Z}}$-sections which was studied in \cite{JaKr2020}. We also consider relations between the condition $K(G) \le 1$ and the weak condition of minimality $Min - \infty  - N$ for normal subgroups which was studied in \cite{ZaKT1985}. 

\section{Some estimations of Krull dimension of modules over minimax abelian groups}\label{section2}

	Let $G$ be a minimax abelian group then  it is not difficult to note that there is a unique minimal characteristic subgroup $P\left( G \right)$ of $G$ such that the quotient group $G/P\left( G \right)$ is free abelian. \par
Let $R$ be a ring then $U(R)$ denotes the group of units of $R$. We define the dimension $dim\, R$ of $R$ 
 to be the supremum of the lengths of all chains of prime ideals in $R$. If $F \ge k$ is a field extension then $tr - deg\left( {F/k} \right)$ denotes the transcendence degree of $F$ over $k$. If $X$ is a subset of $F$ then $k(X)$ denotes a subfield generated by $k$ and $X$ while $k[X]$ denotes a subring generated by $k$ and $X$. \par

	\begin{lemma}\label{Lemma 2.1}
	 Let $R$ be a commutative domain. Then:\par
	(i) if the ring $R$ is Noetherian then $dim\, R = K(R)$;  \par
	(ii) if $S$ is a subring of $R$ such that $R$ is integer over $S$ then $dim\, R = dim\, S$; \par
	(iii) if $R$ is a finitely generated $k$-algebra, where $k$ is a subfield of $R$, then $dim\, R = K(R) = tr-deg({F/k})$, where $F$ is the field of fractions of $R$. 
\end{lemma}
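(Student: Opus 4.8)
The plan is to establish the three assertions in the order (ii), (i), (iii), since (iii) will be deduced from the first two. For (ii), I would invoke the classical theory of integral extensions of commutative rings. The Lying Over and Going Up theorems show that any chain $\mathfrak{p}_0 \subsetneq \mathfrak{p}_1 \subsetneq \cdots \subsetneq \mathfrak{p}_n$ of prime ideals of $S$ is the family of contractions of some chain of prime ideals of $R$ of the same length, whence $dim\, R \ge dim\, S$. Conversely, the Incomparability theorem says that two distinct primes of $R$ lying over a common prime of $S$ are never comparable; consequently the contraction to $S$ of a strictly ascending chain of primes of $R$ is again strictly ascending, giving $dim\, S \ge dim\, R$. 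Both facts are standard, and the domain hypothesis is not really needed for this part.

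For (i), the key point is that for a commutative \emph{Noetherian} ring the Krull dimension $K(R)$, defined via the deviation of the poset of ideals, agrees with the classical dimension $dim\, R$, the supremum of lengths of chains of prime ideals. The inequality $dim\, R \le K(R)$ is the easy half: a chain of primes of length $n$ yields, after a suitable localization, a strictly descending chain of ideals that witnesses deviation at least $n$, and one inducts on $n$. The inequality $K(R) \le dim\, R$ is the substantive half and genuinely uses the Noetherian hypothesis: one argues by induction, reducing modulo the minimal primes to the case of a domain and then applying Krull's principal ideal theorem to bound how much the deviation of a quotient can exceed that of $R$. This is the content of the relevant results of \cite[Chapter 6]{McRo1988} specialised to the commutative case, which I would cite rather than reprove.

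Finally, for (iii): since $R$ is a finitely generated $k$-algebra it is Noetherian by the Hilbert Basis Theorem, so $K(R) = dim\, R$ by (i). Noether normalization provides algebraically independent elements $y_1, \dots, y_d \in R$ such that $R$ is a finitely generated, hence integral, module over the polynomial subring $A = k[y_1, \dots, y_d]$; part (ii) then gives $dim\, R = dim\, A$. Two classical facts close the loop: $dim\, k[y_1, \dots, y_d] = d$, and $tr-deg(F/k) = d$, the latter because $F$ is algebraic over $k(y_1, \dots, y_d)$ so that the transcendence degree over $k$ is unchanged. Combining, $K(R) = dim\, R = d = tr-deg(F/k)$.

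I expect the only non-routine ingredient to be the identity $K(R) = dim\, R$ for commutative Noetherian $R$ used in (i): that is the step where one must appeal to the literature rather than to a one-line reduction, everything else being standard bookkeeping with integral extensions, Noether normalization, and the dimension of polynomial rings over a field.
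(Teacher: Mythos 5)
Your proposal is correct and follows essentially the same route as the paper: part (i) by appeal to the commutative Noetherian case in McConnell--Robson, part (ii) by the Cohen--Seidenberg theorems (Lying Over, Going Up, Incomparability), and part (iii) by Noether normalization combined with (i) and (ii). The paper's proof consists of exactly these citations, so your write-up is simply a more detailed version of the same argument.
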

\begin{proof}
	(i) The assertion follows from \cite[Corollary 6.4.8]{McRo1988}.\par
	(ii) The assertion follows from the Cohen-Seidenberg theorem.\par
	(iii) The assertion follows from the Noether normalization lemma and (ii).   
\end{proof}

Let $R$ be a ring and $M$ be an $R$-module which has Krull dimension. The module $M$ is said to be critical if $K(M/{M_1}) < K(M)$ for any nonzero submodule ${M_1} \le M$. If $p$ is a prime integer then 
${\mathbb{ Z}}_p$ denotes a field of order $p$.  If  $G$ is an abelian group then $Sp(G)$ is the set 
of prime integers $p$ such that $G$ has an infinite $p$-section. If $T$ is a torsion group then $\pi(T)$ 
denotes the set of prime divisors of orders of elements of $T$. For a given set $ \pi $ of prime integer 
we say that an additive abelian group $ G $ is $ \pi $-divisible if $ Gp = G $ for any $p\in  \pi $.

\begin{lemma}\label{Lemma 2.2}
 Let $G$ be a minimax abelian group and let $M = a{ \mathbb{ Z} } G$ be a cyclic critical faithful ${ \mathbb{ Z} } G$-module. Then $M \cong R$, where $R$ is a domain and: \par
	(i) $G = T \times H \le U(R)$, where $H$ is a finitely generated free abelian group and $T$ is a Chernikov group; \par
(ii) either $R = {{ \mathbb{ Z} }_p}[G]$ and $p \notin \pi (T)$ or $R = { \mathbb{ Z} } [G]$ the additive group of $R$ is torsion-free $\pi $-divisible, where $\pi  = Sp(G)$;\par
(iii) the module $M$ is Noetherian and $K(M) \le r(G/P(G)) + 1$. 
\end{lemma}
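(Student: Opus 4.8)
The plan is to start from the hypothesis that $M = a\mathbb{Z}G$ is cyclic, critical and faithful, and first convert "faithful cyclic" into a ring-theoretic statement. Let $I = \mathrm{Ann}_{\mathbb{Z}G}(a)$; then $M \cong \mathbb{Z}G/I$ as a $\mathbb{Z}G$-module, and since $M$ is faithful, $I$ contains no nonzero $G$-invariant ideal, i.e. $I$ contains no nonzero ideal of $\mathbb{Z}G$ (ideals of the commutative ring $\mathbb{Z}G$ are automatically $G$-invariant). The critical hypothesis says $\mathbb{Z}G/I$ has smaller Krull dimension modulo any nonzero submodule, which for a cyclic module translates to: $I$ is not contained in any strictly larger ideal $J$ with $K(\mathbb{Z}G/J) = K(\mathbb{Z}G/I)$. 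A standard argument (as in \cite{Tush2002, Tush2003}) shows that a critical cyclic module over a commutative ring is of the form $\mathbb{Z}G/P$ for a prime ideal $P$; combined with faithfulness, $P$ is a \emph{minimal} prime, so $R := \mathbb{Z}G/P \cong M$ is a domain and $G$ maps into $U(R)$. The first real task is then to analyse which minimal primes of $\mathbb{Z}G$ can have a faithful, critical quotient when $G$ is minimax abelian.

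For part (i), I would decompose the minimax abelian group as an extension of a free abelian group of finite rank by a Chernikov-by-finite piece; writing $G = T \times H$ with $H$ free abelian of finite rank and $T$ the torsion part requires knowing the extension splits, which for minimax abelian $G$ reduces to splitting the (divisible + finite) torsion subgroup off — divisible subgroups are always direct summands of abelian groups, and the remaining finite part splits off a free abelian quotient, so $G = T \times H$ with $T$ Chernikov. The embedding $G \le U(R)$ is automatic once $R$ is a domain: the image of $G$ consists of units because each $g \in G$ has inverse $g^{-1}$ already in $\mathbb{Z}G$, and it is injective by faithfulness. For part (ii), the torsion subgroup $T$ of $G$ acts on the domain $R$; its image is a finite (by criticality, $T$ must in fact be finite — an infinite quasicyclic $p$-subgroup would make $R$ contain $\mathbb{Z}[C_{p^\infty}]$, which has a nonzero nilpotent-free but not domain-compatible structure unless the characteristic is right) multiplicative subgroup of units in a domain, hence cyclic of some order $n$. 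If $p \mid n$ then $\mathbb{Z}$ cannot embed (the cyclotomic relation forces torsion in the additive group), forcing $R$ to have characteristic $p$; one then identifies $R = \mathbb{Z}_p[G]$ modulo the relevant prime. Otherwise $T$ must be $p$-divisible for every $p \in Sp(G)$, which (chasing the cyclotomic polynomials $\Phi_n$ as zero divisors) forces $R = \mathbb{Z}[G]$ with additive group torsion-free and $\pi$-divisible, $\pi = Sp(G)$. The key computational input here is the classical description of $\mathbb{Z}[C_n]$ and its quotients by minimal primes as rings of integers (up to conductor) in cyclotomic fields.

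For part (iii), once $R = \mathbb{Z}[G]$ (or $\mathbb{Z}_p[G]$) modulo a prime, $R$ is a finitely generated algebra over $\mathbb{Z}$ (or over $\mathbb{Z}_p$) since $G = T \times H$ is finitely generated as a \emph{monoid}-with-inverses — here one uses that $H$ is finitely generated and $T$ is finite, so $R$ is a localization of a finitely generated commutative $\mathbb{Z}$-algebra, hence Noetherian, and $M \cong R$ is a Noetherian module. Then by Lemma~\ref{Lemma 2.1}(i), $K(M) = K(R) = \dim R$. To bound $\dim R$: the field of fractions $F$ of $R$ is generated over $\mathbb{Q}$ (or $\mathbb{Q}_p$, respectively $\mathbb{Z}_p(t_1,\dots,t_m)$) by the images of a free basis $h_1,\dots,h_m$ of $H$ together with the finitely many torsion elements, and $m = r(H) = r(G/P(G))$. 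In the characteristic-zero case $\dim R = \dim \mathbb{Z}[F\text{-order}]$ is at most $1 + \mathrm{tr\text{-}deg}(F/\mathbb{Q}) = 1 + r(G/P(G))$ by Lemma~\ref{Lemma 2.1}(iii) adapted to $\mathbb{Z}$-algebras (one factor of $1$ coming from the Krull dimension of $\mathbb{Z}$ itself); in characteristic $p$, $\dim R \le \mathrm{tr\text{-}deg}(F/\mathbb{Z}_p) = r(G/P(G)) \le r(G/P(G)) + 1$. Either way $K(M) \le r(G/P(G)) + 1$.

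The main obstacle I anticipate is part (ii): pinning down exactly when the characteristic is forced to be $p$ versus $0$, and showing the torsion subgroup $T$ is actually finite in the critical case (or handling the quasicyclic summands correctly), requires a careful analysis of how cyclotomic factors $\Phi_n(g)$ behave as potential zero-divisors in the domain $R$ — this is where the $Sp(G)$ condition and the $\pi$-divisibility of the additive group must be extracted, and it is the technical heart borrowed from \cite{Tush2002, Tush2003}. The Noetherian conclusion and the dimension bound in (iii) are then comparatively routine applications of Noether normalization and Lemma~\ref{Lemma 2.1}.
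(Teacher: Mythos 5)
Your overall skeleton (prime annihilator from criticality, hence $M\cong\mathbb{Z}G/P=R$ a domain, then embed $G$ into the units of $R$) matches the paper's first step, but the core of your argument for (i)--(iii) contains genuine errors. First, you derive $G=T\times H$ with $H$ finitely generated free abelian from the structure of minimax abelian groups alone (``divisible part splits off, the remaining finite part splits off a free abelian quotient''). This is false: a torsion-free minimax abelian group such as $\mathbb{Z}[1/p]$ is not free abelian and $G/t(G)$ need not be finitely generated, so no such decomposition exists in general. In the paper the finite generation of $H$ is not a structural fact about $G$ but a consequence of the hypothesis that $R$ (equivalently $M$) has Krull dimension, obtained by quoting \cite[Theorem 3.4(i)]{Tush2003} for the subgroup $G\le F^{*}$; this is precisely the input your argument is missing. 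Second, your claim in (ii) that criticality forces $T$ to be finite is wrong and contradicts both the statement of the lemma ($T$ is allowed to be Chernikov) and Example 3.7 of the paper, where $H$ is quasicyclic and $R=\mathbb{Z}[H\times\langle g\rangle]$ has $K(R)=1$: the quotient of $\mathbb{Z}[C_{p^{\infty}}]$ by a suitable prime is the characteristic-zero domain generated by all $p$-power roots of unity, so nothing forces positive characteristic or finiteness of $T$. The actual dichotomy of (ii), including $p\notin\pi(T)$ in characteristic $p$ and the $\pi$-divisibility of $R^{+}$ with $\pi=Sp(G)$ in characteristic $0$, is again taken from \cite[Theorem 3.4(i)]{Tush2003}, not from a finite cyclotomic computation.

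These errors propagate into (iii): since $T$ may be infinite, $R$ need not be a finitely generated $\mathbb{Z}$- or $\mathbb{Z}_p$-algebra, so Noetherianness is not the routine Noether-normalization fact you describe. Indeed the subring generated by all $p$-power roots of unity is not Noetherian; the Noetherian conclusion holds only because of the divisibility conditions in (ii) (e.g.\ the relevant primes of $Sp(G)$ become invertible), and the paper obtains it by citing \cite[Theorem 3.4(iii)]{Tush2003}. A smaller but real confusion concerns faithfulness: since $\mathbb{Z}G$ is commutative and $M$ is cyclic, ring-theoretic faithfulness would force $P=0$, and your inference that ``$P$ is a minimal prime'' is unjustified; what is meant, and what the paper uses, is faithfulness of the $G$-action ($C_G(M)=1$), which yields the embedding $G\hookrightarrow U(R)\le F^{*}$ on which the whole argument rests. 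In short, the three assertions you treat as routine (splitting with $H$ finitely generated, finiteness of $T$, Noetherianness of $R$) are exactly the nontrivial content of \cite[Theorem 3.4]{Tush2003} that the paper's proof invokes, and your substitutes for them do not hold.
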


\begin{proof}
By \cite[Lemma 3.1]{Tush2003}, $an{n_{{ \mathbb{ Z} } G}}(a) = P$ is a prime ideal of ${ \mathbb{ Z} } G$ and hence $M = a{ \mathbb{ Z} } G \cong { \mathbb{ Z} } /P = R$, where $R$ is a domain. \par
(i) Let $F$ be the field of fractions of the domain $R$ and ${F^*}$ be the multiplicative group of $F$. Since the module $M$ is faithful, we can assume that $G \le {F^*}$ and the subring $R$ of $F$ is generated by $G$. Since the module $M$ has Krull dimension, the ring $R$ also has Krull dimension. Therefore, by \cite[Theorem 3.4(i)]{Tush2003}, $G = T \times H$, where $H$ is a finitely generated free abelian group and $T$ is a Chernikov group. \par
	(ii) If $char\,F = p > 0$ then  $R = {{ \mathbb{ Z} }_p}[G]$ and $p \notin \pi (T)$. If $char\,F = 0$ then the additive group $R^+$ of $ R= { \mathbb{ Z} }[G]$ is torsion-free and hence, by \cite[Theorem 3.4(i)]{Tush2003}, $R^+$ is $\pi $-divisible, where $\pi  = Sp(G)$. \par
	(iii) It follows from \cite[Theorem 3.4(iii)]{Tush2003} that $M$ is a Noetherian ${ \mathbb{ Z} } G$-module. 
\end{proof}

	Let $\pi $ be a finite set of prime integers, we say that an integer $n$ is a $\pi $-integer if all prime divisors of $n$ are contained in $\pi $. It is easy to note that fractions of ${ \mathbb{ Q} } $ whose denominators are $\pi $-integers form a subring ${{ \mathbb{ Q} }_\pi }$ of ${ \mathbb{ Q} } $. It is not difficult to check that $K({{ \mathbb{ Q} }_\pi }) = dim\, ({{ \mathbb{ Q} }_\pi }) = 1$
	
\begin{proposition}\label{ Proposition 2.3}
 Let $F$ be a field and $G = T \times H$ be a subgroup of the multiplicative group ${F^*}$ of $F$, where $H$ is a finitely generated free abelian group and $T$ is a Chernikov group. Let $R$ be a subring of $F$ generated by the subgroup $G$. Suppose that the ring $R$ has Krull dimension $K(R)$ and $F$ is the field of fractions of the domain $R$. Then:\par
(i) if $char\,F = p > 0$ then $K(R) = tr-deg({{\mathbb{Z}}_p}(H)/{{ \mathbb{Z}}_p})$= 
$tr-deg({{\mathbb{Z}}_p}(G)/{{\mathbb{Z}}_p})$; \par
(ii) if $char\,F = 0$ then $K(R) = tr-deg ({\mathbb{Q}}(H)/{\mathbb{Q}}) + 1 =
 tr-deg({\mathbb{Q}}(G)/{\mathbb{Q}}) + 1$.   
\end{proposition}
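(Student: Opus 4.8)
The plan is to compute $K(R)$ directly and identify it with a transcendence degree, treating the two characteristics separately; the passage from $G$ to $H$ is soft. Since $T$ is Chernikov it is a torsion group, so every element of $T$ is a root of unity, hence algebraic over the prime subfield $k_0$ of $F$ (which is $\mathbb{Z}_p$ when $char\,F=p$ and $\mathbb{Q}$ when $char\,F=0$); thus $k_0(T)/k_0$ is algebraic, and for any subset $X\subseteq F$ the extension $k_0(X\cup T)/k_0(X)$ is algebraic, whence $tr-deg(k_0(X\cup T)/k_0)=tr-deg(k_0(X)/k_0)$. Taking $X=H$ and using $G=T\times H$ gives $tr-deg(k_0(G)/k_0)=tr-deg(k_0(H)/k_0)$, the second equality in each of (i) and (ii). So it remains to identify $K(R)$ with $tr-deg(k_0(H)/k_0)$, respectively with $tr-deg(k_0(H)/k_0)+1$.

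For (i), write $H$ as the free abelian group on $h_1,\dots,h_d$; since $R$ is generated by $G$ we have $R=k[h_1^{\pm1},\dots,h_d^{\pm1}]$ with $k:=\mathbb{Z}_p[T]$. The domain $k$ is integral over the field $\mathbb{Z}_p$ (each $t\in T$, being a root of unity, satisfies a monic polynomial over $\mathbb{Z}_p$), and an integral domain integral over a field is a field; hence $k$ is a subfield of $F$, algebraic over $\mathbb{Z}_p$. Thus $R$ is a finitely generated algebra over the subfield $k$, it is a domain, and $F=k(h_1,\dots,h_d)$ is its field of fractions; Lemma \ref{Lemma 2.1}(iii) gives $K(R)=tr-deg(F/k)$, which equals $tr-deg(\mathbb{Z}_p(H)/\mathbb{Z}_p)$ because $k/\mathbb{Z}_p$ is algebraic. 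With the first paragraph this proves (i).

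For (ii) the ring $R$ contains no subfield and the extra ``$+1$'' is arithmetic in origin. The plan is: (a) use the hypothesis that $K(R)$ exists, together with the structural analysis of subrings of fields with Krull dimension in \cite{Tush2002,Tush2003}, to reduce to the case in which $R$ is module-finite over a finitely generated $\mathbb{Z}$-subalgebra $B$ of $R$ containing $h_1^{\pm1},\dots,h_d^{\pm1}$ (for instance $B=\mathbb{Z}[1/n][h_1^{\pm1},\dots,h_d^{\pm1}]$ for a suitable positive integer $n$, $R$ being integral over such a $B$ because $T$ consists of roots of unity); then $B$ is Noetherian, so $R$ is a finitely generated $B$-module and in particular a Noetherian ring. (b) Now Lemma \ref{Lemma 2.1}(i) gives $K(R)=\dim R$, and Lemma \ref{Lemma 2.1}(ii), applied to the integral extension $B\subseteq R$, gives $\dim R=\dim B$. (c) Finally $\dim B=1+tr-deg(F/\mathbb{Q})$: the ring $B$ is not a $\mathbb{Q}$-algebra (a nonzero finitely generated $\mathbb{Z}$-algebra has a maximal ideal with finite residue field, so it cannot contain $\mathbb{Q}$), so by the dimension theory of finitely generated $\mathbb{Z}$-algebras $\dim B=1+\dim(B\otimes_{\mathbb{Z}}\mathbb{Q})$, while $B\otimes_{\mathbb{Z}}\mathbb{Q}=\mathbb{Q}[h_1^{\pm1},\dots,h_d^{\pm1}]$ is a finitely generated $\mathbb{Q}$-algebra domain with fraction field $\mathbb{Q}(h_1,\dots,h_d)$, so $\dim(B\otimes_{\mathbb{Z}}\mathbb{Q})=tr-deg(\mathbb{Q}(h_1,\dots,h_d)/\mathbb{Q})=tr-deg(F/\mathbb{Q})$ by Lemma \ref{Lemma 2.1}(iii) (the last equality because $F$ is algebraic over $\mathbb{Q}(h_1,\dots,h_d)$). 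Combining the three steps and using $tr-deg(F/\mathbb{Q})=tr-deg(\mathbb{Q}(H)/\mathbb{Q})$ from the first paragraph yields $K(R)=1+tr-deg(\mathbb{Q}(H)/\mathbb{Q})$.

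I expect step (a) of case (ii) to be the real obstacle: one must squeeze out of the mere existence of $K(R)$ that $R$ is module-finite over such a $B$ --- intuitively, that the Chernikov part $T$ is ``small'' relative to $H$ once finitely many primes have been inverted. The mechanism should be that if a quasicyclic $q$-subgroup of $T$ survived in $F^{*}$ with $q$ not inverted in $R$ and with $\mathbb{Q}(T)$ not absorbed into the fraction field of $B$, then the ring $\mathbb{Z}[C_{q^\infty}]$ would provide a valuation subring of $F$ with non-discrete value group, whose lattice of ideals contains a copy of the poset of rationals --- so the hypothesis already rules this out. A secondary, purely commutative-algebra ingredient is the arithmetic dimension formula $\dim B=1+tr-deg(F/\mathbb{Q})$ of step (c), which replaces the field-coefficient statement of Lemma \ref{Lemma 2.1}(iii). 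Neither difficulty arises in characteristic $p$, where $R$ is automatically a finitely generated algebra over the field $\mathbb{Z}_p[T]$ and Lemma \ref{Lemma 2.1}(iii) applies on the nose; this is the structural reason the ``$+1$'' appears only in part (ii).
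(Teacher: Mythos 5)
Your part (i) is fine: writing $R=k[h_1^{\pm1},\dots,h_d^{\pm1}]$ over the field $k=\mathbb{Z}_p[T]$ and applying Lemma \ref{Lemma 2.1}(iii) is a harmless variant of the paper's reduction (the paper instead uses integrality of $R$ over $\mathbb{Z}_p[H]$ and Lemma \ref{Lemma 2.1}(ii)--(iii)). The genuine gap is step (a) of part (ii), and it is not just ``the real obstacle'' --- it is false. The hypotheses allow $T$ to be an infinite Chernikov group of roots of unity, and in that case $R$ is never module-finite over a finitely generated $\mathbb{Z}$-subalgebra $B$: module-finiteness would make $F=\mathrm{Frac}(R)$ a finite extension of the finitely generated field $\mathrm{Frac}(B)$, hence a finitely generated field over $\mathbb{Q}$, whereas $F\supseteq \mathbb{Q}(T)$ is an infinite algebraic extension of $\mathbb{Q}$ as soon as $T$ is infinite. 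This case cannot be excluded by the existence of $K(R)$: the paper's own Example \ref{ Example 3.7} (equivalently \cite[Lemma 3.2]{Tush2003}) has $T$ a quasicyclic $p$-group, $H=\langle 1/p\rangle$, $R=\mathbb{Z}[T\times H]$ and $K(R)=1$. Your heuristic that Krull dimension forces $T$ to be ``small'' only yields (via \cite[Theorem 3.4(i)]{Tush2003}, cf. Lemma \ref{Lemma 2.2}(ii)) that the primes in $Sp(G)$ become invertible in $R$; it does not make $\mathbb{Q}(T)$ finite over $\mathbb{Q}$, so $\mathbb{Q}(T)$ can never be ``absorbed'' into $\mathrm{Frac}(B)$. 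With (a) gone, your route to the Noetherianity of $R$ collapses, and with it the application of Lemma \ref{Lemma 2.1}(i) that gives $K(R)=\dim R$, which is the hinge of the whole argument in characteristic $0$.

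The paper circumvents exactly this point: $K(R)=\dim R$ is obtained from Lemma \ref{Lemma 2.1}(i) together with Lemma \ref{Lemma 2.2}(iii) (that is, the Noetherianity statement of \cite[Theorem 3.4(iii)]{Tush2003}), an external input your argument does not supply; after that, only \emph{integrality} of $R$ over $\mathbb{Z}[H]$ is used (true since $T$ consists of roots of unity), so Lemma \ref{Lemma 2.1}(ii) gives $\dim R=\dim\mathbb{Z}[H]$, and finally $\dim\mathbb{Z}[H]=tr\textrm{-}deg(\mathbb{Q}(H)/\mathbb{Q})+1$ is proved by Noether normalization over $\mathbb{Q}$, comparison with $\mathbb{Q}_\pi[H]$, and \cite[Lemma 8]{Sega1977} for the upper bound. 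Your step (c) --- the dimension formula for finitely generated $\mathbb{Z}$-algebra domains of characteristic zero --- is correct and, applied directly to $\mathbb{Z}[H]$ (which is a finitely generated $\mathbb{Z}$-algebra even though its generators may be algebraically dependent in $F$), would give a clean alternative to that last computation; but it cannot repair step (a) nor replace the missing Noetherianity of $R$.
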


\begin{proof}
It follows from Lemmas 2.1(i) and 2.2(iii) that $dim\, R = K(R)$ and hence in the proof we can replace $K(R)$ with $dim\, R$.\par
(i) Since the quotient group $G/H$ is torsion, it is not difficult to show that $R$ is integer over ${{ \mathbb{ Z} }_p}[H]$. Then it follows from Lemma 2.1(ii) that $dim\, R = dim\, {{ \mathbb{ Z} }_p}[H]$. Therefore, as the subgroup $H$ is finitely generated, it follows from Lemma 2.1(iii) that $dim\, (R) = tr-deg({{\mathbb{Z}}_p}(H)/{{ \mathbb{ Z} }_p})$. \par
(ii) Since the quotient group $G/H$ is torsion, it is not difficult to show that $R$ is integer over ${ \mathbb{ Z} } [H]$. Then it follows from Lemma 2.1(ii) that $dim\, R = dim\, { \mathbb{ Z} } [H]$. It follows from the Noether normalization lemma that ${ \mathbb{ Q} } [H] = ({ \mathbb{ Q} } [X])[Y]$, where $X$ is a maximal set of algebraically independent over ${ \mathbb{ Q} } $ elements and $Y$ is a finite set of integer over  ${ \mathbb{ Q} } [X]$ elements. It is not difficult to show that there is a finite set $\pi $ of prime integers such that elements of $Y$ are integer over ${{ \mathbb{ Q} }_\pi }[X]$. Then it follows from Lemma 2.1(ii) that $dim\, {{ \mathbb{ Q} }_\pi }[H] = {{ \mathbb{ Q} }_\pi }[X] = 1 + \left| X \right|$=$tr-deg({\mathbb{Q}}(H)/{ \mathbb{Q}}) + 1$.\par
It is easy to note that if $Q \ne P$ prime ideals of ${\mathbb{Q}_\pi }[H]$ then ${\mathbb{ Z} } [H] \cap Q \ne {\mathbb{ Z} } [H] \cap P$ and it implies that $dim\, {{ \mathbb{ Q} }_\pi }[H] \le dim\, { \mathbb{ Z} } [H]$.  Therefore,  $tr-deg({\mathbb{Q}}(H)/{\mathbb{Q}}) + 1 \le dim\, { \mathbb{ Z} } [H]$. On the other hand, It follows from \cite[Lemma 8]{Sega1977} that $tr-deg({ \mathbb{Q}}(H)/{\mathbb{Q}}) + 1 \ge dim\, {\mathbb{Z}}[H]$.
\end{proof}

Let $J$ be a commutative domain, $G$ be an abelian group of finite rank and let $M$ be a $JG$-module which is $J$-torsion-free. Then it is not difficult to note that there is an integer $n$ for which there exists a free abelian (or trivial) subgroup $X \le G$ of rank $n$ such that the module $M$ is not $JX$-torsion but for any free abelian subgroup $Y \le G$ of rank greater than $n$ the module $M$ is $JY$-torsion. We will say that such an integer $n = tr-deg_{JG}(M)$ is the transcendence degree of the module $M$ over $JG$. 

\begin{theorem}\label{ Theorem 2.4}
 Let $G$ be an abelian minimax group and $M$ be a ${ \mathbb{ Z} } G$-module which has Krull dimension $K(M)$. Then:\par
	(i) if the module $M$ is annihilated by some prime integer $p$ then $K(M) = tr-deg_{{\mathbb{Z}}_p G}(M)$;\par
	(ii) if the module $M$ is ${ \mathbb{ Z} } $-torsion-free then $K(M) = tr-deg_{{\mathbb{Z} } G}(M) + 1$.  
\end{theorem}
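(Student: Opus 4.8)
The plan is to push everything into Lemma~2.2 and Proposition~2.3, which together compute the Krull dimension of a cyclic critical faithful module over the group ring of an abelian minimax group, and to reconcile the module-theoretic transcendence degree $tr-deg_{\mathbb{Z}G}(\cdot)$ of the statement with the field-theoretic transcendence degree $tr-deg(\mathbb{Q}(G)/\mathbb{Q})$, respectively $tr-deg(\mathbb{Z}_p(G)/\mathbb{Z}_p)$, appearing in Proposition~2.3. I would prove (i) first and invoke it inside the proof of (ii). Throughout, $\bar G$ denotes the image of $G$ in the unit group of whatever domain is in play; as a homomorphic image of $G$ it is again abelian minimax.

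The bridge between the two notions of transcendence degree is the following observation. If $C$ is a cyclic critical faithful module over $\mathbb{Z}\bar G$, then by Lemma~2.2 it is isomorphic to a domain $R$ generated by $\bar G=\bar T\times\bar H\le U(R)$, with $R=\mathbb{Z}[\bar G]$ in characteristic $0$ and $R=\mathbb{Z}_p[\bar G]$ in characteristic $p$. For a free abelian $X\le\bar G$ the action of $\mathbb{Z}X$ on $C\cong R$ factors through the natural ring homomorphism $\mathbb{Z}X\to R$, and since $R$ is a domain, $C$ is $\mathbb{Z}X$-torsion precisely when that homomorphism has nonzero kernel, that is, when the elements of $X$ are algebraically dependent over the prime field. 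Consequently $tr-deg_{\mathbb{Z}\bar G}(C)$ is the largest rank of a free abelian subgroup of $\bar G$ whose elements are algebraically independent over the prime field, and, choosing a transcendence basis of $\mathbb{Q}(\bar G)/\mathbb{Q}$ (resp.\ $\mathbb{Z}_p(\bar G)/\mathbb{Z}_p$) from within $\bar G$, this equals $tr-deg(\mathbb{Q}(\bar G)/\mathbb{Q})$ (resp.\ $tr-deg(\mathbb{Z}_p(\bar G)/\mathbb{Z}_p)$). Feeding this into Proposition~2.3 yields $K(C)=tr-deg_{\mathbb{Z}\bar G}(C)+1$ in characteristic $0$ and $K(C)=tr-deg_{\mathbb{Z}_p\bar G}(C)$ in characteristic $p$.

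For the lower bound in (ii), let $n=tr-deg_{\mathbb{Z}G}(M)$, witnessed by a free abelian $X\le G$ of rank $n$ and some $a\in M$ with $\mathrm{ann}_{\mathbb{Z}X}(a)=0$. Then $\mathrm{ann}_{\mathbb{Z}G}(a)$ is disjoint from the multiplicatively closed set $\mathbb{Z}X\setminus\{0\}$, hence lies in a prime $P$ of $\mathbb{Z}G$ with $P\cap\mathbb{Z}X=0$. The domain $\mathbb{Z}G/P=\mathbb{Z}[\bar G]$ then has characteristic $0$, has Krull dimension (being a subquotient of $M$, via $\mathbb{Z}G/P\cong a\mathbb{Z}G/aP$), and by \cite[Theorem~3.4]{Tush2003} satisfies $\bar G=\bar T\times\bar H$; Proposition~2.3 gives $K(\mathbb{Z}[\bar G])=tr-deg(\mathbb{Q}(\bar G)/\mathbb{Q})+1\ge n+1$, the inequality because $\mathbb{Z}X$ embeds in $\mathbb{Z}[\bar G]$. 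Since $\mathbb{Z}G/P$ is a subquotient of $M$ we conclude $K(M)\ge n+1$; in case (i) the identical argument in characteristic $p$ gives $K(M)\ge n$.

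It remains to prove the upper bounds, and here the reduction is the delicate part. From $K(M)=\sup\{K(N):N\text{ a cyclic submodule of }M\}$ together with the fact that $tr-deg_{\mathbb{Z}G}(\cdot)$ does not increase on submodules, it suffices to bound $K(N)$ for $N$ cyclic and either $\mathbb{Z}$-torsion-free or annihilated by $p$. Such $N$ is Noetherian — this is implicit in the methods of \cite{Tush2002,Tush2003} underlying Lemma~2.2(iii) — so it admits a critical composition series, and a cyclic submodule of a top-dimensional critical factor furnishes a cyclic critical subfactor $C'$ of $N$ with $K(C')=K(N)$. If $C'$ is killed by $p$ (automatic in case (i)), then $K(N)=K(N_1/pN_1)$ for a suitable submodule $N_1\le N$; in case (i) one computes this directly from $C'\cong\mathbb{Z}_p[\bar G]$ and Proposition~2.3, and in case (ii) part (i) gives $K(N_1/pN_1)=tr-deg_{\mathbb{Z}_pG}(N_1/pN_1)$, after which a short argument using that $N$ is $\mathbb{Z}$-torsion-free promotes a witnessing free abelian subgroup to one over which $N$ itself is non-torsion, so $K(N)\le tr-deg_{\mathbb{Z}G}(N)$. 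If instead $C'$ has characteristic $0$, it is a cyclic critical faithful $\mathbb{Z}\bar G$-module, so the dictionary above gives $K(C')=tr-deg_{\mathbb{Z}\bar G}(C')+1$; a free abelian subgroup of $\bar G$ witnessing $tr-deg_{\mathbb{Z}\bar G}(C')$ lifts along $G\twoheadrightarrow\bar G$ — being free abelian, hence projective — to a free abelian subgroup of $G$ over which $N$ is non-torsion, so $K(N)\le tr-deg_{\mathbb{Z}G}(N)+1$. I expect the main obstacle to be exactly this last package of bookkeeping: carrying transcendence degrees and characteristics safely through the passage from $M$ to a cyclic submodule to a critical subfactor over a quotient group, and in particular excluding, via part (i), the spurious case of a characteristic-$p$ critical subfactor inside a torsion-free module. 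All of this should, however, be within reach of the techniques of \cite{Tush2002,Tush2003}.
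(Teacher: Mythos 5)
Your proposal follows the paper's architecture in its computational core: both arguments funnel everything through Lemma 2.2 and Proposition 2.3, and both convert the field-theoretic transcendence degree of $\mathbb{Q}(\bar G)$ (resp.\ $\mathbb{Z}_p(\bar G)$), where $\bar G=G/C_G(N)$, into the module-theoretic $tr\!-\!deg_{\mathbb{Z}G}(M)$ by lifting a free abelian subgroup of $\bar G$ with algebraically independent generators to a free abelian subgroup of $G$ meeting $C_G(N)$ trivially. Two of your choices genuinely differ from the printed proof. First, your lower bound (in (ii), $K(M)\ge tr\!-\!deg_{\mathbb{Z}G}(M)+1$) is obtained by enlarging $\mathrm{ann}_{\mathbb{Z}G}(a)$ to a prime $P$ disjoint from $\mathbb{Z}X\setminus\{0\}$ and applying Proposition 2.3 to the subquotient $\mathbb{Z}G/P$; the paper instead asserts a ``critical $\mathbb{Z}Y$-torsion-free section'' and feeds it to Proposition 2.3. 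Your variant is correct and arguably more self-contained. Second, you explicitly exclude the possibility of a characteristic-$p$ critical subfactor inside a $\mathbb{Z}$-torsion-free module, a case the paper's proof of (ii) silently skips when it writes $N\cong\mathbb{Z}[T\times H]$ for the top critical section; that is a point in your favor.

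The genuine gap is in your upper-bound reduction, which rests on two unproved claims: (1) $K(M)=\sup\{K(N):N$ a cyclic submodule of $M\}$, and (2) every cyclic $\mathbb{Z}G$-module with Krull dimension ($G$ abelian minimax) is Noetherian. Neither is a formal property of deviation. For (1), Krull dimension is not controlled by finitely generated submodules in any obvious way -- for instance, a module all of whose finitely generated submodules are artinian need not even have Krull dimension (an infinite direct sum of simple modules), so any proof of (1) must genuinely use the hypothesis that $M$ has Krull dimension, and you supply none. For (2), Lemma 2.2(iii) and the underlying \cite[Theorem 3.4(iii)]{Tush2003} give Noetherianity only for cyclic \emph{critical faithful} modules, so ``implicit in the methods'' does not cover arbitrary cyclic modules with Krull dimension. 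The paper performs this step differently: it passes directly to a cyclic $K(M)$-critical section of $M$, to which Lemma 2.2 applies verbatim. Note the logical relation: your claim (1) follows from the paper's assertion (the cyclic generator of a $K(M)$-critical section generates a cyclic submodule of $M$ of the same Krull dimension), but not conversely without (2); so as written you have replaced the paper's single unproved reduction by two, and an actual argument for (1) -- or for the existence of a cyclic $K(M)$-critical subfactor -- is exactly what your proposal still has to supply. A minor further slip: in the ``dictionary'' paragraph, in characteristic $p$ every module is $\mathbb{Z}X$-torsion (since $p$ acts as zero), so the criterion must be stated for $\mathbb{Z}_pX$, as your final formula indeed does.
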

\begin{proof}
	(i) Evidently, $M$ can be considered as a ${{ \mathbb{ Z} }_p}G$-module. 
Suppose that $K(M) = n$ then $M$ has a cyclic $n$-critical section $N$. By Lemma 2.2, $G/{C_G}(N) = T \times H$, where $H$ is a finitely generated free abelian group and $T$ is a Chernikov group,  and $N \cong R = {{ \mathbb{ Z} }_p}[T \times H]$, where $R$ is a domain. Therefore, by Proposition 2.3(i), 
$n = tr-deg({\mathbb{Z}}_p (H)/{\mathbb{Z}}_p)$. It easily implies that there is a free abelian subgroup 
$D \le H$ whose generators are algebraically independent over ${ \mathbb{ Z} }_p$ and $r(D) = n$. 
Then there is a free abelian subgroup $X \le G$ such $X \cap {C_G}(N) = 1$ and $X \cdot {C_G}(N)/{C_G}(N) = D$. It easily implies that the section $N$ is not ${{ \mathbb{ Z} }_p}X$-torsion and hence 
$n \le tr-deg_{{\mathbb{ Z}}_p G} (M)$.\par
	Suppose that there is a free abelian subgroup $Y \le G$ such that $M$ is not ${{ \mathbb{ Z} }_p}Y$-torsion and $n < r(Y)$. It easily implies that the module $M$ has a critical ${{ \mathbb{ Z} }_p}Y$-torsion-free section. Therefore,  $r(Y) \le tr-deg({{\mathbb{Z}}_p}(G)/{{ \mathbb{ Z} }_p})$ and it follows from Proposition 2.3(i) that $r(Y) \le K(N) \le K(M) = n$ but it contradicts $n < r(Y)$. Thus,  $n = tr-deg_{{\mathbb{Z}}_p G}(M)$. \par
(ii) If $K(M) = 0$ then $M$ contains a simple ${ \mathbb{ Z} } G$-module $U$. It follows from 
\cite[Theorem 2.3]{Zait1980} that $U$ is annihilated by some prime integer $p$. 
Thus, we can assume that $K(M) \ge 1$.\par
Suppose that $K(M) = n + 1$ then $M$ has a cyclic $n + 1$-critical section $N$. By Lemma 2.2, $G/{C_G}(N) = T \times H$, where $H$ is a finitely generated free abelian group and $T$ is a Chernikov group,  and $N \cong R = { \mathbb{ Z} } [T \times H]$, where $R$ is a domain. Therefore, by Proposition 2.3(ii), $n = tr-deg({ \mathbb{ Q} } (H)/{ \mathbb{ Q} } )$. It easily implies that there is a free abelian subgroup $D \le H$ whose generators are algebraically independent over ${ \mathbb{ Q} } $ and $r(D) = n$. Then there is a free abelian subgroup $X \le G$ such $X \cap {C_G}(N) = 1$ and $X \cdot {C_G}(N)/{C_G}(N) = D$. It easily implies that the section $N$ is not ${ \mathbb{ Z} } X$-torsion and hence $n \le tr-deg_{{\mathbb{Z}} G}(M)$.\par
	Suppose that there is a free abelian subgroup $Y \le G$ such that $M$ is not ${ \mathbb{ Z} } Y$-torsion and $n < r(Y)$. It easily implies that the module $M$ has a critical ${ \mathbb{ Z} } Y$-torsion-free section. Therefore,  $r(Y) \le tr-deg ({\mathbb{Q}}(G)/{\mathbb{Q}})$ and it follows from Proposition 2.3(ii) that $r(Y) + 1 \le K(N) \le K(M) = n + 1$ but it contradicts $n < r(Y)$. Thus, $n = tr-deg_{{\mathbb{Z}}G}(M)$. 
\end{proof}

	If $M$ is an abelian group then $t(M)$ denotes the torsion subgroup of $M$, ${M_p}$ denotes a Sylow subgroup of $t(M)$ for $p \in \pi (t(M))$ and $\Omega ({M_p})$ denotes the subgroup of ${M_p}$ which consists of elements of order $p$.

\begin{corollary}\label{ Corollary 2.5}
 Let $G$ be an abelian minimax group and $M$ be a ${ \mathbb{ Z} } G$-module which has Krull dimension $K(M)$. Then 
  $K(M) = max\{ tr-deg_{{\mathbb{Z}}G}(M/t(M)) + 1,\, \{tr-deg_{{\mathbb{Z}}_p G}(\Omega (M_p))\, |p\,  \in \pi (t(M)) \}\} $.
  \end{corollary}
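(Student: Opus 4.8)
The plan is to split $M$ along its torsion subgroup $t(M)$ and reduce everything to Theorem~2.4. First I would use the exact sequence $0\to t(M)\to M\to M/t(M)\to 0$ together with the standard fact that the Krull dimension of an extension is the maximum of the Krull dimensions of the submodule and the quotient, to get $K(M)=\max\{K(t(M)),K(M/t(M))\}$; here both $t(M)$ and $M/t(M)$ inherit Krull dimension from $M$. Since $M/t(M)$ is $\mathbb{Z}$-torsion-free, Theorem~2.4(ii) gives $K(M/t(M))=tr-deg_{\mathbb{Z}G}(M/t(M))+1$ (with the convention that the zero module has transcendence degree $-1$, so this term is vacuous when $M=t(M)$).

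Next I would treat $t(M)=\bigoplus_{p\in\pi(t(M))}M_p$. Because $t(M)$ has Krull dimension and an infinite direct sum of nonzero modules has no Krull dimension (well known, and in any case forced by the criterion recalled in Section~1, since such a sum contains a subposet of submodules order-isomorphic to $\mathbb{Q}$), only finitely many $M_p$ are nonzero. Hence $\pi(t(M))$ is finite and $K(t(M))=\max\{K(M_p):p\in\pi(t(M))\}$, again because the Krull dimension of a finite direct sum is the maximum over its summands. So everything comes down to proving, for each $p\in\pi(t(M))$, that $K(M_p)=K(\Omega(M_p))$: granting this, $\Omega(M_p)$ is annihilated by $p$ and has Krull dimension, so Theorem~2.4(i) upgrades it to $K(M_p)=tr-deg_{\mathbb{Z}_pG}(\Omega(M_p))$, and assembling the pieces yields the stated maximum.

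The inequality $K(\Omega(M_p))\le K(M_p)$ is immediate, $\Omega(M_p)$ being a submodule of $M_p$. For the reverse, put $d=K(M_p)$ and, exactly as in the proof of Theorem~2.4, take a cyclic $d$-critical section $N$ of $M_p$, so $N\cong a\mathbb{Z}G/W$ for some $a$ of $p$-power order in $M_p$ and some $\mathbb{Z}G$-submodule $W\le a\mathbb{Z}G$. Set $V=a\mathbb{Z}G\le M_p$; then $V$ is an abelian $p$-group of bounded exponent, say $p^mV=0$, and it carries the finite filtration $0=V_0\le V_1\le\dots\le V_m=V$ with $V_k=\{v\in V:p^kv=0\}$ and $V_1=\Omega(V)$. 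For each $k$, multiplication by $p^{k-1}$ is a $\mathbb{Z}G$-homomorphism $V_k\to V$ with kernel $V_{k-1}$ and image contained in $V_1=\Omega(V)$, so $V_k/V_{k-1}$ embeds in $\Omega(V)$ and $K(V_k/V_{k-1})\le K(\Omega(V))$. Consequently $K(V)=\max_{1\le k\le m}K(V_k/V_{k-1})\le K(\Omega(V))\le K(\Omega(M_p))$, the last step because $\Omega(V)=V\cap\Omega(M_p)\le\Omega(M_p)$. Since $N$ is a quotient of $V$, this gives $d=K(N)\le K(V)\le K(\Omega(M_p))$, which finishes the equality.

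I expect the crux to be this last step — that replacing the $p$-group $M_p$ by its socle $\Omega(M_p)$ does not lower the Krull dimension — because $M_p$ itself may have unbounded exponent and $\Omega(M_p)$ need not split off as a summand. The device that makes it work is that a cyclic critical section already lives inside a cyclic submodule $a\mathbb{Z}G$, which automatically has bounded exponent, so the elementary socle-filtration estimate applies there; the torsion-free part is dispatched directly by Theorem~2.4(ii), and the only remaining care is with the degenerate cases ($M$ torsion, some $M_p=0$), absorbed by the conventions on empty maxima and on the transcendence degree of the zero module.
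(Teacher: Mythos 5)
Your proof is correct and follows the route the paper itself intends: Corollary 2.5 is stated without proof as a direct consequence of Theorem 2.4, and your argument supplies exactly that reduction, splitting $M$ along $t(M)=\bigoplus_{p}M_p$ (finitely many nonzero primary components, since infinite direct sums have no Krull dimension) and applying Theorem 2.4(ii) to $M/t(M)$ and Theorem 2.4(i) to $\Omega(M_p)$. The one step not literally contained in Theorem 2.4, namely $K(M_p)=K(\Omega(M_p))$, you handle correctly via the socle filtration of the cyclic (hence bounded-exponent) submodule $a\mathbb{Z}G$ covering a section of full Krull dimension, using only the same standard facts (Krull dimension of extensions and finite sums, and the existence of a cyclic section attaining $K$) that the paper already invokes in its proof of Theorem 2.4.
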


\section{On metabelian groups with $K(G) \le 1$} \label{Section 3}

\begin{lemma}\label{ Lemma 3.1} Let $G$ be a group with Krull dimension, then $G$ has no $G$-section $B/A =  \times_{i = 1}^\infty ({B_i}/A)$, where ${B_i}$ are normal subgroups of $G$  and ${B_i} \ne A$ for all 
$ i \in \mathbb{N}$.
\end{lemma}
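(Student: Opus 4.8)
The plan is to argue by contradiction. Assuming such a $G$-section $B/A$ exists, I will produce inside the poset $\Omega$ of all normal subgroups of $G$ a sub-poset order-isomorphic to the rationals $(\mathbb{Q},\le)$; by the characterization of deviation recalled in the Introduction, this is incompatible with $\Omega$ having deviation, i.e.\ with $G$ having Krull dimension.

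First I set up a Boolean lattice of $G$-invariant subgroups. Since $B/A=\times_{i=1}^{\infty}(B_i/A)$ with each $B_i$ normal in $G$ and $B_i\neq A$, and $A$ is normal in $G$ (as $B/A$ is a $G$-section), for every subset $S\subseteq\mathbb{N}$ the subgroup $B_S$ of $B$ with $A\le B_S$ and $B_S/A=\prod_{i\in S}(B_i/A)$ (the internal direct product of the factors indexed by $S$) is $G$-invariant modulo $A$, hence normal in $G$, and lies between $A$ and $B$. Clearly $S\subseteq S'$ forces $B_S\le B_{S'}$; and if $j\in S'\setminus S$ then the canonical projection $B/A\to B_j/A$ is onto but annihilates $B_S/A$, so $B_j/A\not\le B_S/A$ and therefore $B_S\lneq B_{S'}$. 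Thus $S\mapsto B_S$ is an order embedding of the power set $(2^{\mathbb{N}},\subseteq)$ into $\Omega$.

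Next I embed $\mathbb{Q}$ into $2^{\mathbb{N}}$. Fix an enumeration $\mathbb{Q}=\{q_1,q_2,\dots\}$ and set $S_q=\{n\in\mathbb{N}: q_n<q\}$ for $q\in\mathbb{Q}$. Then $q\le q'$ gives $S_q\subseteq S_{q'}$, while $q<q'$ gives $S_q\subsetneq S_{q'}$, since the index $n$ with $q_n=q$ lies in $S_{q'}$ but not in $S_q$; hence $q\mapsto S_q$ is an order embedding of $(\mathbb{Q},\le)$ into $(2^{\mathbb{N}},\subseteq)$. Composing with the embedding of the previous paragraph, $\{B_{S_q}: q\in\mathbb{Q}\}$ is a sub-poset of $\Omega$ order-isomorphic to $\mathbb{Q}$. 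Since $G$ has Krull dimension, $\Omega$ has deviation and so, by the cited characterization, contains no such sub-poset --- a contradiction. Hence $G$ has no $G$-section of the asserted form.

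I do not expect a genuine obstacle here: the entire content is the observation that an infinite internal direct product of nontrivial $G$-invariant subgroups embeds a copy of $2^{\mathbb{N}}$, hence of $\mathbb{Q}$, into the poset of normal subgroups. The only points requiring (routine) care are that each $B_S$ is actually normal in $G$ --- which uses normality of the $B_i$ and of $A$ --- and that $S\mapsto B_S$ is strictly monotone, which uses the directness of the product.
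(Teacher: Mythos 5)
Your proof is correct, but it follows a genuinely different route from the paper. The paper disposes of Lemma 3.1 in one line by invoking the arguments of \cite[Lemma 6.2.6]{McRo1988}: there one assumes a counterexample whose deviation is minimal, regroups the infinite direct product $\times_{i=1}^{\infty}(B_i/A)$ into infinitely many blocks each of which is again an infinite direct product, and observes that the resulting strictly descending chain of normal subgroups has all its factors containing infinite direct products, so almost all of them would have to have strictly smaller deviation --- contradicting minimality. Your argument instead bypasses any induction on the deviation: you embed the Boolean algebra $2^{\mathbb{N}}$ into the poset $\Omega$ of normal subgroups via $S\mapsto B_S=A\prod_{i\in S}B_i$ (normality and strict monotonicity are checked correctly, the latter using the directness of the product), then embed $(\mathbb{Q},\le)$ into $2^{\mathbb{N}}$ by Dedekind-cut sets $S_q=\{n: q_n<q\}$, and conclude from the criterion quoted in the paper's Introduction (a poset has deviation iff it contains no sub-poset order-isomorphic to $\mathbb{Q}$) that $\Omega$ could not have deviation. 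What each approach buys: yours is self-contained modulo that $\mathbb{Q}$-criterion and needs no minimal-counterexample machinery, while the paper's reduction to the McConnell--Robson argument works directly from the definition of deviation and does not rely on the characterization via the rationals; both establish exactly the stated lemma.
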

\begin{proof}
	We can use the arguments of the proof of \cite[Lemma 6.2.6]{McRo1988}.
\end{proof}
\begin{lemma}\label{ Lemma 3.2} 
Let $A$ be a torsion-free abelian normal subgroup of a group $G$. 
If $A$ has an infinite strictly ascending chain $\{A_i \, | \, i \in \mathbb{N}\}$ of isolated minimax $G$-invariant 
subgroups $A_i$ then the group $G$ has no Krull dimension. 
\end{lemma}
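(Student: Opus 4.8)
The plan is to derive a contradiction from the existence of Krull dimension by producing, inside the poset of normal subgroups of $G$, a configuration that forbids deviation — concretely, by contradicting Lemma 3.1. The key point is that the ascending chain $\{A_i\}$ of isolated minimax $G$-invariant subgroups of the torsion-free abelian $A$ can be ``spread out'' so that the successive factors $A_{i+1}/A_i$, taken in an appropriate quotient, become mutually independent and hence assemble into an infinite internal direct product of nontrivial $G$-invariant subgroups. First I would pass to $\bar A = A/\bigcap_i A_i'$ or, more simply, work inside $A$ itself: since each $A_i$ is isolated and minimax and the chain is strictly ascending, I can choose elements $a_i \in A_{i+1}\setminus A_i$ and consider the $G$-invariant subgroups $C_i$ generated (as $G$-modules, i.e. as $\ZZ G$-submodules of the additive group $A$) by $a_i$. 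Because $A_i$ is isolated, $a_i$ has infinite order modulo $A_i$, so $C_i \not\le A_i$; thus each $C_i \ne 1$ and $C_i \le A_{i+1}$.

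Next I would arrange independence. The subgroup $\sum_i C_i$ need not be direct on the nose, but since $A$ has finite rank (being, at each level, minimax — or more precisely since every $A_i$ is minimax, hence of finite rank, while $A = \bigcup A_i$ has at most countable rank), a naive infinite direct sum is impossible, which is exactly why I must instead work in a quotient. The right move is: set $B = \bigcup_{i} A_i$ and $A_\infty = \bigcap$ of the isolators of the $C_i$ appropriately — but cleaner is to take the quotient $Q = A / \overline{\bigcup_i A_i}$? No: the chain is ascending, so its union is the obstruction, not a normal subgroup I want to kill. Instead, I would exploit that $\{A_i\}$ strictly ascending with each $A_i$ isolated forces the ranks $r(A_i)$ to be unbounded (an isolated subgroup of $A$ of rank $r$ that is properly contained in another isolated subgroup has strictly smaller rank), so $A$ has infinite rank. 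Then, passing to the $G$-module $A\otimes\QQ$ over $\ZZ G$ (or $\QQ G$), the isolated $G$-invariant subgroups $A_i$ correspond to a strictly ascending chain of $\QQ G$-submodules $V_i$ of $V = A\otimes\QQ$, each finite-dimensional over $\QQ$. Choosing a $\QQ G$-submodule complement step by step is not automatic, but one can instead choose nonzero $\QQ G$-submodules $W_i \le V_{i+1}$ with $W_i \cap V_i = 0$; then $\sum_i W_i$ is an infinite internal direct sum of nonzero $\QQ G$-submodules of $V$. Pulling this back to $A$ (intersecting with the image of $A$ and using torsion-freeness to keep everything nontrivial) yields normal subgroups $B_i \supsetneq A_0 =: A'$ of $G$ with $\bigl(\sum B_i\bigr)/A' = \times_i (B_i/A')$, contradicting Lemma 3.1.

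The main obstacle I anticipate is exactly the independence step: producing genuinely \emph{independent} $G$-invariant pieces from a merely ascending chain, since ascending chains do not split in general. I expect this is handled by the observation that in the $\QQ G$-module $V = A\otimes\QQ$, for each $i$ one may pick any $\QQ G$-submodule $W_i$ that is minimal subject to $W_i \le V_{i+1}$ and $W_i \not\le V_i$ (such exists because $V_{i+1}/V_i \ne 0$ and one can take a cyclic $\QQ G$-submodule generated by a lift of a nonzero element, then pass to a minimal nonzero submodule of it modulo $V_i$), and then replace $W_i$ by $W_i \cap (W_i + \sum_{j<i} W_j \cap V_i)$-type corrections; more efficiently, re-index so that $W_i$ lands strictly between two far-apart levels $V_{n_i}$ and $V_{n_{i+1}}$, which guarantees $W_i \cap \sum_{j\ne i} W_j = 0$ by a support/dimension count. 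Once the direct sum $\times_i B_i/A'$ is in hand, Lemma 3.1 applies verbatim since the $B_i$ are normal in $G$ and each $B_i \ne A'$, and the contradiction is complete; therefore $G$ cannot have Krull dimension.
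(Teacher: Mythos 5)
Your overall strategy (contradict Lemma 3.1 by manufacturing an infinite direct sum of nontrivial $G$-invariant sections) is the same as the paper's, but the step on which everything hinges is exactly the one that fails: there is in general no nonzero $\QQ G$-submodule $W_i \le V_{i+1}$ with $W_i \cap V_i = 0$, because $V_{i+1}$ may be an \emph{essential} extension of $V_i$ (a uniform module), and then every nonzero submodule of $V_{i+1}$ meets $V_i$. This is not a removable technicality. Take $G = \langle g \rangle$ infinite cyclic acting on $A = \ZZ[x]$ by the shift $f(x) \mapsto f(x+1)$, with $A_n$ the polynomials of degree at most $n$: the $A_n$ form a strictly ascending chain of isolated, finitely generated (hence minimax) $G$-invariant subgroups, yet every nonzero $G$-invariant subgroup of $A$ (or of $V = A \otimes \QQ$) contains a nonzero constant, since applying $g-1$ to a minimal-degree element drops the degree. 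So $W_i \cap V_i \ne 0$ for every possible choice at every level, and none of your proposed repairs helps: a submodule minimal subject to $W_i \not\le V_i$ still meets $V_i$ (minimal nonzero submodules need not even exist), and re-indexing to far-apart levels $V_{n_i} < V_{n_{i+1}}$ cannot create independence that isn't there, since the obstruction is intersection with the \emph{lower} level, not overlap among the $W_j$. Your group-level variant ($\ZZ G$-submodules $C_i$ generated by $a_i \in A_{i+1}\setminus A_i$) has the same defect: $C_i \not\le A_i$ does not give $C_i \cap A_i = 1$, so no direct sum results.

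The paper sidesteps splitting entirely by an arithmetic device that uses minimaxity in an essential way: since each $A_i$ is minimax, $Sp(A_i)$ is finite, so one can pick primes $p_i \notin Sp(A_i)$, set $\pi_0 = 1$, $\pi_i = \pi_{i-1}p_i$, and form $B = \langle \pi_{j-1}A_j \mid j \ge 1 \rangle$ and $C = \langle \pi_j A_j \mid j \ge 1 \rangle$ inside $A = \bigcup_i A_i$. Isolation of the $A_i$ gives the intersection identities ($B_i \cap D_i = C_i$, $B_i \cap C = C_i$) which yield $B/C \cong \oplus_{i=1}^{\infty}\bigl(\pi_{i-1}\bar A_i / \pi_i \bar A_i\bigr)$ with $\bar A_i = A_i/A_{i-1}$, and each summand is nontrivial precisely because $\bar A_i$ is not $p_i$-divisible ($p_i \notin Sp(A_i)$). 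This produces the infinite direct decomposition of a $G$-section demanded by Lemma 3.1 without ever needing invariant complements, which is the idea missing from your argument; note also that in your proposal the minimaxity of the $A_i$ is never genuinely used, which is a sign the argument could not be complete, since the shift module above shows the independence you want can fail even though the lemma's conclusion still holds there.
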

\begin{proof}
There is no harm in assuming that $ A=\bigcup_{ i \in \mathbb{N}} A_i$. Now, let $i$ be an integer such that $i \ge 0$. Let ${p_i}$ be a prime integer such that ${p_i} \notin Sp({A_i})$. Put ${\pi_0} = 1$ and ${\pi_i} = {\pi_{i - 1}}{p_i}$. Let ${B_i} = \left\langle {{A_j}{\pi_{j - 1}}|j \le i} \right\rangle $, $B = \left\langle {{A_i}{\pi_{i - 1}}|i \ge 1} \right\rangle $,  ${C_i} = \left\langle {{A_j}{\pi_j}|j \le i} \right\rangle $, $C = \left\langle {{A_i}{\pi_i}|i \ge 1} \right\rangle $ and ${D_i} = {C_i}\left\langle {{A_j}{\pi_{j - 1}}|j > i} \right\rangle $. Then ${B_i}{D_i} = B$ and, as the subgroups ${A_i}$ are isolated, we have ${B_i} \cap {D_i} = {C_i}$. It implies that     $B/{C_i} = {B_i}/{C_i} \oplus {D_i}/{C_i}$. Put ${A_0} = 1$ and let ${\bar A_i} = {A_i}/{A_{i - 1}}$. As the subgroups ${A_i}$ are isolated, we see that ${B_i} \cap C = {C_i}$. Then, as $B/{C_i} = {B_i}/{C_i} \oplus {D_i}/{C_i}$, it is not difficult to show that $B/C \cong  \oplus_{i = 1}^\infty ({\bar A_i}{\pi_{i - 1}}/{\bar A_i}{\pi_i})$ and the assertion follows from Lemma 3.1. 
\end{proof}

\begin{proposition}\label{ Proposition 3.3}
 Let $G$ be a metabelian group with an abelian normal subgroup $A \le G$ such that the quotient group $B = G/A$ is finitely generated abelian. Suppose that the quotient group $G/t(A)$ has Krull dimension and has no ${ \mathbb{ Z} } \wr { \mathbb{ Z} } $-sections. Then the quotient group $G/t(A)$ is minimax and  $K(G/t(A)) \le 1$.
\end{proposition}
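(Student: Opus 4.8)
Throughout write $\bar G=G/t(A)$, $\bar A=A/t(A)$ and $B=G/A=\bar G/\bar A$. Then $\bar A$ is a $\ZZ$-torsion-free abelian normal subgroup of $\bar G$, and since $\bar A$ and $B$ are abelian, conjugation makes $\bar A$ a $\ZZ B$-module whose $\ZZ B$-submodules are exactly the normal subgroups of $\bar G$ lying in $\bar A$. This lattice of normal subgroups is a sub-poset of the lattice of all normal subgroups of $\bar G$, so $\bar A$ has Krull dimension as a $\ZZ B$-module, with $K_{\ZZ B}(\bar A)\le K(\bar G)$. If $A=t(A)$ then $\bar G=B$ is finitely generated abelian and both conclusions are clear, so assume $\bar A\ne 0$.

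I would first extract two consequences of the hypotheses. Since $\bar G$ has no $\ZZ\wr\ZZ$-section, for each $b\in B$ of infinite order and each $\bar a\in\bar A$ there is a nonzero $f\in\ZZ[b^{\pm1}]$ with $f\bar a=0$: otherwise, lifting $b$ to $g\in\bar G$, the $\ZZ[b^{\pm1}]$-module $M=\ZZ[b^{\pm1}]\bar a$ is free abelian on $\{\,g^i\bar a g^{-i}\mid i\in\ZZ\,\}$ with $g$ acting by the shift, and $\langle g\rangle\cap\bar A=1$ (as $b$ has infinite order), so $\langle M,g\rangle=M\rtimes\langle g\rangle\cong\ZZ\wr\ZZ$ is a section of $\bar G$, a contradiction. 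Hence $\bar A$ is $\ZZ X$-torsion for every free abelian $X\le B$ of rank $\ge 1$, i.e. (since $\bar A\ne 0$ is $\ZZ$-torsion-free) $tr-deg_{\ZZ B}(\bar A)=0$. Secondly, $\bar G$ is metabelian with Krull dimension, so by \cite[Theorem 4.5]{Tush2003} it is $G$-minimax; intersecting a $G$-minimax series of $\bar G$ with $\bar A$ yields a finite $\bar G$-invariant series of $\bar A$ each of whose factors is a $\ZZ B$-module satisfying $Min$ or $Max$ for $\ZZ B$-submodules.

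The main step is that $\bar A$ is minimax. A $Min$ factor is an abelian Chernikov group, hence minimax. Let $M$ be a $Max$ factor, a Noetherian $\ZZ B$-module; then $t(M)$ is a Noetherian torsion $\ZZ$-module, hence finite, and $N=M/t(M)$ is a finitely generated, $\ZZ$-torsion-free $\ZZ B$-module which, being a section of $\bar A$, is $\ZZ\langle b\rangle$-torsion for every infinite-order $b\in B$. Writing $B=\langle c_1\rangle\times\cdots\times\langle c_r\rangle\times T$ with $T$ finite and each $c_i$ of infinite order, and using that $N$ is finitely generated and $\ZZ B$ commutative, $N$ is annihilated by some nonzero $F_i(c_i)\in\ZZ[c_i^{\pm1}]$ for each $i$; therefore $N\otimes\QQ$ is a finitely generated module over the finite-dimensional $\QQ$-algebra $\QQ B/(F_1(c_1),\dots,F_r(c_r))$, so $N$ has finite rank $n=r(N)$. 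Since $N$ is $\ZZ$-torsion-free, the action of $\ZZ B$ on $N$ factors through a finitely generated commutative subring $S\subseteq M_n(\QQ)$; such an $S$ has bounded denominators, so $S\subseteq M_n(\ZZ[1/m])$ for some $m$, whence $S$ is minimax as an abelian group. As $N$ is finitely generated over $S$, $N$ is minimax. Thus every factor of the series is minimax, so $\bar A$ is minimax; since $B$ is finitely generated abelian, $\bar G=G/t(A)$ is minimax.

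For the bound $K(\bar G)\le 1$ I would combine two facts. Since $\bar A$ is $\ZZ$-torsion-free, $B$ is abelian minimax, and $\bar A$ has Krull dimension as a $\ZZ B$-module, Theorem 2.4(ii) gives $K_{\ZZ B}(\bar A)=tr-deg_{\ZZ B}(\bar A)+1=1$. Also $K(\bar G/\bar A)=K(B)\le 1$, because in any descending chain of subgroups of a finitely generated abelian group only finitely many factors have positive torsion-free rank, the rest being finite. Arguing as in the proof of Lemma~3.1 (cf. \cite[\S6.2]{McRo1988}): for a descending chain $\{N_i\}$ of normal subgroups of $\bar G$, the chains $\{N_i\bar A/\bar A\}$ in $B$ and $\{N_i\cap\bar A\}$ of $\ZZ B$-submodules of $\bar A$ have all but finitely many factors satisfying $Min$, and for all but finitely many $i$ the group $N_i/N_{i+1}$ is an extension of the finite group $N_i\bar A/N_{i+1}\bar A$ by the $Min$ $\ZZ B$-section $(N_i\cap\bar A)/(N_{i+1}\cap\bar A)$, hence satisfies $Min$ for $\bar G$-invariant subgroups; that is, all but finitely many intervals $[N_{i+1},N_i]$ in the lattice of normal subgroups have deviation $0$, so $K(\bar G)\le 1$. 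I expect the crux to be the third paragraph — showing that a $Max$ factor is minimax, i.e. that a finitely generated $\ZZ$-torsion-free $\ZZ B$-module which is $\ZZ\langle b\rangle$-torsion for every infinite-order $b\in B$ is minimax as an abelian group — since it is there that the absence of $\ZZ\wr\ZZ$-sections enters the internal structure of $\bar A$; the remainder is an application of Theorem 2.4 together with the standard deviation calculus for series of normal subgroups.
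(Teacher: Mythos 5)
Your route is genuinely different from the paper's (which works with finitely generated subgroups: Hall's max-$n$ together with \cite[Corollary B1]{JaKr2020} makes every subgroup $\langle x,C\rangle$ with $G=AC$ minimax, and then Lemma~3.2 terminates the resulting ascending chain of isolated minimax $G$-invariant subgroups of $A$), and your final deviation calculus for $K(\bar G)\le 1$ and your treatment of the $Max$ factors are essentially sound. The problem is in the third paragraph, and the first of its two finiteness claims is a genuine gap. The assertion ``a $Min$ factor is an abelian Chernikov group'' is false for artinian $\mathbb{Z}B$-modules in general: already for $B=\langle b\rangle$ infinite cyclic, the injective hull of $\mathbb{F}_p[b^{\pm 1}]/(b-1)$, namely $\mathbb{F}_p((b-1))/\mathbb{F}_p[[b-1]]$ with $b=1+(b-1)$ acting as a unit, is an artinian module whose underlying group is an infinite elementary abelian $p$-group, hence neither Chernikov nor minimax. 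Nothing you have proved excludes such a factor: the no-$\mathbb{Z}\wr\mathbb{Z}$ hypothesis, in the form you use it, only produces annihilators for elements of the torsion-free group $\bar A$ and says nothing about its torsion sections, while an artinian factor has deviation $0$, so the Krull dimension hypothesis is not contradicted either. This is exactly the point where the hypotheses must do real work (in the paper it is \cite[Corollary B1]{JaKr2020} that supplies finite rank). A second, smaller, slip is ``$t(M)$ is a Noetherian torsion $\mathbb{Z}$-module, hence finite'': a finitely generated torsion $\mathbb{Z}B$-module can be infinite (e.g. $\mathbb{F}_p[b^{\pm 1}]$); finiteness again needs the torsion hypothesis, say by observing that, $\bar A$ being torsion-free, the annihilators $f\in\mathbb{Z}[b^{\pm1}]$ of its elements may be taken primitive and therefore remain nonzero modulo $p$.

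Both defects are repairable inside your framework, but not for free. Since simple $\mathbb{Z}B$-modules are finite (Hall), a torsion-free artinian $\mathbb{Z}B$-module is zero, so every $Min$ factor of your series is torsion; hence the whole torsion-free rank of $\bar A$ is carried by the $Max$ factors, which your argument shows have finite rank, and since $\bar A$ is torsion-free this gives $r(\bar A)<\infty$. Only then does the $Min$ step close: a torsion artinian module of finite rank has finite socle, so involves finitely many primes, and its primary components have finite rank, hence satisfy $Min$, so it is Chernikov. As written, however, the step ``$Min$ factor $\Rightarrow$ Chernikov'' is asserted without justification, it is false as a general statement about artinian $\mathbb{Z}B$-modules, and it is precisely where the substance of the minimax conclusion lies; the proposal therefore does not yet prove the proposition.
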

\begin{proof}
	Passing to the quotient group $G/t(A)$  we can assume that the subgroup $ A $ is torsion-free.  Let $C$ be a finitely generated subgroup of $G$ such that $G = AC$, let $x \in G$ and let $X = \left\langle {x,\,C} \right\rangle $. Then $X$ is a finitely generated metabelian group and, by \cite{Hall1959}, $X$ meet the maximal condition for normal subgroups. Therefore, $X$ has Krull dimension. As $G$ has no ${\mathbb{Z}} \wr {\mathbb{Z}} $-sections, we can conclude that $X$ has no ${ \mathbb{ Z} }\wr {\mathbb{Z}} $-sections. Then it follows from \cite[Corollary B1]{JaKr2020} that $X$ is of finite rank and, and it follows from by \cite[Lemma 6]{Hall1959}, $X$ is minimax. Therefore, $A \cap X = {X_1}$ is a $G$-invariant minimax subgroup of $A$. It follows from Lemma 3.1 that $\left| {\pi (A/{X_1})} \right| < \infty $ and hence ${A_1} = i{s_A}({X_1})$ is a $G$-invariant isolated minimax subgroup of $A$. Applying the above arguments to the quotient group $G/{A_1}$ we can construct an ascending chain of minimax $G$-invariant isolated minimax subgroups of $A$. It follows from Lemma 3.2 that this chain is finite and hence the subgroup $A$ is minimax. 
\end{proof}

	It immediately follows from the above proposition that the group constructed in \cite[Example 2.13]{JaKr2020} has no Krull dimension. 

\begin{theorem}\label{Theorem 3.4}
 Let $G$ be a metabelian group with an abelian torsion-free normal subgroup $M \le G$ such that the quotient group $B = G/M$ is abelian. Suppose that the group $G$ has Krull dimension $K(G)$. Then $K(G) \le 1$ if and only if $G$ has no ${ \mathbb{ Z} } \wr { \mathbb{ Z} } $-sections. 
\end{theorem}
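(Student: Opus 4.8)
The strategy is to reduce the general metabelian case to the finitely-generated-quotient situation already handled in Proposition 3.3, and then feed the structural conclusion into Theorem 2.4 to pin down $K(G)$. First I would establish the easy direction: if $K(G) \le 1$, then $G$ cannot have a $\mathbb{Z}\wr\mathbb{Z}$-section, since $\mathbb{Z}\wr\mathbb{Z}$ has a free abelian normal subgroup of infinite rank on which the top $\mathbb{Z}$ acts by shift, producing (via Lemma 3.1, or directly via Lemma 3.2) a configuration incompatible with Krull dimension $\le 1$ — indeed $\mathbb{Z}\wr\mathbb{Z}$ has no Krull dimension at all, and Krull dimension is inherited by sections, so even $K(G)$ existing rules out such a section.

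For the harder converse, suppose $G$ has Krull dimension and no $\mathbb{Z}\wr\mathbb{Z}$-section. Since $M$ is torsion-free, $t(M) = 1$, so the hypotheses of Proposition 3.3 are in force once I produce the finitely generated quotient $B = G/M$ in the right form; but $B$ here is only assumed abelian, not finitely generated. The plan is to exhaust $B$ by finitely generated subgroups: for each finitely generated $B_0 \le B$ with preimage $G_0 \le G$, Proposition 3.3 applies to $G_0$ (with normal subgroup $M$, quotient $B_0$ finitely generated abelian, and $G_0/t(M) = G_0$ since $M$ is torsion-free), giving that $M$ is $G_0$-minimax — more precisely that $M$ is minimax as an abstract group. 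Since this holds and the minimax rank of $M$ is an invariant not depending on $B_0$, I conclude $M$ itself is minimax. Then $B = G/M$ must also be minimax: if $B$ had infinite rank, one could pull back an infinite-rank free abelian section and, using that $M$ is minimax together with Lemma 3.1/3.2 applied to a suitable chain of isolated $G$-invariant subgroups, contradict the existence of $K(G)$; alternatively one invokes that a metabelian group with Krull dimension and minimax abelian normal subgroup forces the quotient to be minimax. Hence $G$ is a metabelian minimax group.

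Once $G$ is minimax metabelian, the computation of $K(G)$ is governed by Theorem 2.4 (and Corollary 2.5) applied to $M$ as a $\mathbb{Z}B$-module. Here the key point is that $K(G)$, the deviation of the poset of normal subgroups, relates to $K_{\mathbb{Z}B}(M)$: normal subgroups of $G$ contained in $M$ are exactly $\mathbb{Z}B$-submodules of $M$, and the remaining contribution from $G/M$ is bounded since $G/M$ is minimax abelian. Since $M$ is $\mathbb{Z}$-torsion-free, Theorem 2.4(ii) gives $K_{\mathbb{Z}B}(M) = tr\text{-}deg_{\mathbb{Z}B}(M) + 1$, and the absence of a $\mathbb{Z}\wr\mathbb{Z}$-section forces $tr\text{-}deg_{\mathbb{Z}B}(M) = 0$: a free abelian subgroup $Y \le B$ of rank $1$ over which $M$ is not $\mathbb{Z}Y$-torsion would yield, by standard arguments on modules over $\mathbb{Z}[t,t^{-1}]$, a $\mathbb{Z}\wr\mathbb{Z}$-section in $G$. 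Therefore $K_{\mathbb{Z}B}(M) = 1$, and combining with the minimax quotient $B$ one gets $K(G) \le 1$. The main obstacle I anticipate is the last step — correctly relating the group-theoretic Krull dimension $K(G)$ to the module-theoretic $K_{\mathbb{Z}B}(M)$ and controlling the contribution of the quotient $G/M$ — together with the precise argument that vanishing transcendence degree is equivalent to the no-$\mathbb{Z}\wr\mathbb{Z}$-section condition; both require care about which subgroups are normal in $G$ versus merely $B$-invariant.
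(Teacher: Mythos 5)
Your proposal has two genuine defects, one in each direction. For the ``easy'' direction you assert that $\mathbb{Z} \wr \mathbb{Z}$ has no Krull dimension and that group Krull dimension is inherited by sections, so that the mere existence of $K(G)$ already excludes $\mathbb{Z} \wr \mathbb{Z}$-sections. Both claims are false: $\mathbb{Z} \wr \mathbb{Z}$ is a finitely generated metabelian group, hence satisfies $Max-N$ by Hall's theorem, so the deviation of its normal subgroup lattice exists (indeed its base group corresponds to the ideal lattice of $\mathbb{Z}[t,t^{-1}]$, giving $K(\mathbb{Z}\wr\mathbb{Z})=2$); and the deviation of the poset of \emph{normal} subgroups does not pass to arbitrary subgroups or sections, since their normal subgroups need not be normal in $G$. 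The group $G=\mathbb{Z}\wr\mathbb{Z}$ itself satisfies all hypotheses of the theorem, has Krull dimension, and has a $\mathbb{Z}\wr\mathbb{Z}$-section, so your argument would prove a false statement; the correct ``only if'' direction must instead come out of the quantitative bound $K(G)\le 1$, exactly as in the equivalence $K(M)=1 \Leftrightarrow tr\text{-}deg_{\mathbb{Z}B}(M)=0$ used by the paper.

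The ``hard'' direction is also unworkable as planned. First, Proposition~3.3 cannot be applied to the preimage $G_0$ of a finitely generated $B_0\le B$, because the existence of $K(G)$ does not give $K(G_0)$: normal subgroups of $G_0$ inside $M$ are only $B_0$-invariant, not $B$-invariant, a much larger poset. Second, and decisively, the intermediate goal ``$M$ is minimax, hence $G$ is minimax'' is false under the theorem's hypotheses: Example~3.7 of the paper is a torsion-free metabelian group with $K(G)=1$ (hence, by the theorem, no $\mathbb{Z}\wr\mathbb{Z}$-sections) in which $M=R^+\cong\oplus_{i=1}^{\infty}(\mathbb{Q}_p)_i$ is not minimax; applied there with $B_0=\langle g\rangle$ your scheme would ``prove'' $R^+$ minimax, which also shows concretely that $G_0=R\rtimes\langle g\rangle$ fails to have Krull dimension. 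The paper avoids all of this and argues directly: every subgroup of the abelian quotient $B$ is a normal subgroup of $G$, so $devB$ exists and $B$ is minimax by \cite[Lemma 4.4]{Tush2003}; normal subgroups of $G$ contained in $M$ are precisely the $\mathbb{Z}B$-submodules, so $K_{\mathbb{Z}B}(M)$ exists and (with $B$ minimax) $K(G)\le 1$ amounts to $K_{\mathbb{Z}B}(M)\le 1$; torsion-freeness rules out $K(M)=0$ via \cite[Theorem 2.3]{Zait1980}, and Theorem~2.4(ii) gives $K(M)=tr\text{-}deg_{\mathbb{Z}B}(M)+1$, with $tr\text{-}deg_{\mathbb{Z}B}(M)=0$ equivalent to the absence of $\mathbb{Z}\wr\mathbb{Z}$-sections. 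Your closing paragraph does gesture at this computation, but the two points you defer as ``anticipated obstacles'' ($B$ minimax and the link between $K(G)$ and $K_{\mathbb{Z}B}(M)$) are precisely what the proof needs, and the minimax reduction you propose in their place cannot be repaired.
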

\begin{proof}
 By \cite[Lemma 4.4]{Tush2003}, the group $B$ is abelian minimax, We can consider $M$ as a ${ \mathbb{ Z} } $-torsion-free ${ \mathbb{ Z} } B$-module. If $K(M) = 0$ then $M$ contains a simple ${ \mathbb{ Z} } G$-module $U$. It follows from \cite[Theorem 2.3]{Zait1980} that $U$ is annihilated by some prime integer $p$. Thus, we can assume that $K(M) = 1$. \par
It follows from Theorem 2.4 that $K(M) = tr-deg_{{\mathbb{Z}}B}(M) + 1$ and hence $K(M) = 1$ if and only if $tr-deg_{{\mathbb{Z}}B}(M) = 0$. The last condition means that for any element of infinite order $g \in B$ the module $M$ is ${ \mathbb{ Z} } \left\langle g \right\rangle $-torsion, i.e. $M$ has no ${ \mathbb{ Z} } \wr { \mathbb{ Z} } $-sections. 
\end{proof}

\begin{corollary}\label{Corollary 3.5}
 Let $G$ be a metabelian group with an abelian normal subgroup $A \le G$ such that the quotient group $B = G/A$ is finitely generated abelian. Suppose that the quotient group $G/t(A)$ has Krull dimension $K(G) \le 1$  Then the quotient group $G/t(A)$ is minimax.
\end{corollary}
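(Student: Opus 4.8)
The plan is to read the statement off from Theorem~3.4 and Proposition~3.3 after reducing to the torsion-free situation. First I would replace $G$ by $\bar G = G/t(A)$ and put $\bar A = A/t(A)$; then $\bar A$ is a torsion-free abelian normal subgroup of $\bar G$, the quotient $\bar G/\bar A \cong G/A = B$ is finitely generated abelian (in particular abelian), and $\bar G$ is again metabelian, being a homomorphic image of $G$. By hypothesis $\bar G = G/t(A)$ has Krull dimension with $K(\bar G) \le 1$.

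Next I would apply Theorem~3.4 to the group $\bar G$ with torsion-free abelian normal subgroup $M = \bar A$: all of its hypotheses are satisfied, so the ``only if'' direction, together with $K(\bar G) \le 1$, shows that $\bar G$ has no $\mathbb{Z}\wr\mathbb{Z}$-sections.

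Finally I would apply Proposition~3.3 to $\bar G$ with abelian normal subgroup $\bar A$. Since $\bar A$ is torsion-free we have $t(\bar A) = 1$, hence $\bar G/t(\bar A) = \bar G$; this group has Krull dimension and, by the previous step, no $\mathbb{Z}\wr\mathbb{Z}$-sections, while $\bar G/\bar A \cong B$ is finitely generated abelian. Proposition~3.3 then gives that $\bar G/t(\bar A) = \bar G = G/t(A)$ is minimax, which is exactly the assertion.

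There is essentially no genuine obstacle here: all the substance lies in Theorem~3.4 and Proposition~3.3, and the only point needing a moment's care is that their hypotheses transfer to $\bar G = G/t(A)$ --- which is immediate, since a quotient of a metabelian group is metabelian and $A/t(A)$ is torsion-free.
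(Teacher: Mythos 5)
Your proposal is correct and follows exactly the route the paper intends: the paper's proof is the one-line remark that the corollary follows from Proposition~3.3 and Theorem~3.4, and you have simply spelled out the (straightforward) verification that their hypotheses hold for $\bar G = G/t(A)$ with $\bar A = A/t(A)$. No issues.
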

\begin{proof}
The Corollary follows from Proposition 3.3 and the above theorem. 
\end{proof}

\begin{proposition}\label{ Proposition 3.6}
 Let $G$ be a metabelian group with an abelian normal subgroup $A \le G$ such that the quotient group $B = G/A$ is finitely generated abelian. Suppose that the group $G$ has Krull dimension $K(G) \le 1$. Then the group $G$ satisfies the condition $Min - \infty  - N$. 
\end{proposition}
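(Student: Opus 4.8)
The plan is to pass to a $G$-invariant series of $G$ whose factors are abelian, to check that each factor satisfies the weak minimal condition on $G$-invariant subgroups, and then to reassemble these facts into $Min-\infty-N$ for $G$.

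Since $K(G)\le 1$, the group $G$ has Krull dimension, so by \cite[Theorem 4.5]{Tush2003} it is $G$-minimax: there is a finite series $1=G_0\le G_1\le\dots\le G_k=G$ of normal subgroups of $G$ in which every factor $G_{i+1}/G_i$ satisfies $Min$ or $Max$ for $G$-invariant subgroups. Refining this series so that $G'=G_m$ for some $m$ does no harm, since the new factors are $G$-equivariant subgroups or quotients of the old ones and hence retain the property. As $G/A$ is abelian we have $G'\le A$, so for $i<m$ the factor $V_i:=G_{i+1}/G_i$ is a section of the abelian group $A$ on which $A$ acts trivially; hence $V_i$ is a $\mathbb{Z}B$-module and its $G$-invariant subgroups are exactly its $\mathbb{Z}B$-submodules. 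For $i\ge m$ the factor $V_i$ is a section of the abelian group $G/G'$ and so carries the trivial $G$-action. In all cases the poset of $G$-invariant subgroups of $V_i$ is the interval $[G_i,G_{i+1}]$ of the lattice of normal subgroups of $G$, so its deviation is at most $K(G)\le 1$.

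Next I would verify that each $V_i$ satisfies $Min-\infty$ for $G$-invariant subgroups. This is trivial when $V_i$ satisfies $Min$; and when some $V_i$ with $i\ge m$ satisfies $Max$ it is finitely generated abelian, hence soluble minimax, hence satisfies $Min-\infty$. The only substantial case is that of a Noetherian $\mathbb{Z}B$-module $V_i$ ($i<m$) whose submodule poset has deviation at most $1$. The key observation is that $\mathbb{Z}B$ is a finitely generated commutative $\mathbb{Z}$-algebra, so by the general form of the Nullstellensatz every field that is a homomorphic image of $\mathbb{Z}B$ is finite; hence every simple $\mathbb{Z}B$-module, and therefore every $\mathbb{Z}B$-module of finite length, is finite. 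Now in a descending chain of submodules of $V_i$ almost all factors have deviation $\le 0$, i.e.\ are Artinian; being also Noetherian (submodules of a Noetherian module are Noetherian) they have finite length, and so are finite. Thus $V_i$ satisfies $Min-\infty$ for $\mathbb{Z}B$-submodules.

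The proposition then follows by downward induction on $i$: $G/G_i$ satisfies $Min-\infty-N$, which is vacuous for $i=k$ and is the assertion for $i=0$. For the inductive step, let $H_1\ge H_2\ge\dots$ be a descending chain of normal subgroups of $G$ containing $G_i$. The chain $\{(H_j\cap G_{i+1})/G_i\}$ consists of $G$-invariant subgroups of $V_i=G_{i+1}/G_i$, so by the previous paragraph almost all of its factors are finite; and the chain $\{H_jG_{i+1}/G_{i+1}\}$ consists of normal subgroups of $G/G_{i+1}$, so by the inductive hypothesis almost all of its factors are finite. Since $H_{j+1}\le H_j$, Dedekind's modular law gives $H_j\cap H_{j+1}G_{i+1}=H_{j+1}(H_j\cap G_{i+1})$, whence $|H_j:H_{j+1}|=|H_jG_{i+1}:H_{j+1}G_{i+1}|\cdot|H_j\cap G_{i+1}:H_{j+1}\cap G_{i+1}|$ is finite for almost all $j$, so $G/G_i$ satisfies $Min-\infty-N$. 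The heart of the argument, and the only place where $K(G)\le 1$ is genuinely used rather than merely that $G$ has Krull dimension, is the Noetherian case above: it is the bound on deviation that keeps the Artinian subquotients arising along a descending chain of finite length, and it is the Nullstellensatz that upgrades "finite length over $\mathbb{Z}B$" to "finite".
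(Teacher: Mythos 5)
Your proof is correct, but it reaches the conclusion by a noticeably different route than the paper. The paper works entirely at the module level: it regards $A$ as a $\mathbb{Z}B$-module, invokes \cite[Theorem 4.3]{Tush2003} to get a finite series of submodules with Artinian or Noetherian factors, and then reduces a Noetherian factor with Krull dimension at most $1$ to a cyclic $1$-critical module $N$, observing that every proper quotient $N/X$ is Artinian, Noetherian (since $\mathbb{Z}B$ is Noetherian by \cite[Theorem 4]{Hall1959}) and hence finite because simple $\mathbb{Z}B$-modules are finite by \cite[Theorem 3.1]{Hall1959}. You instead decompose the group itself via \cite[Theorem 4.5]{Tush2003} ($G$-minimax series refined through $G'$), bound the deviation of each factor's poset of $G$-invariant subgroups by viewing it as an interval in the normal subgroup lattice, and then avoid the critical-series reduction entirely: you argue directly from the definition of deviation that in any descending chain of submodules of a Noetherian factor almost all quotients are Artinian, hence of finite length, hence finite, replacing Hall's theorem on simple modules by the Nullstellensatz for finitely generated $\mathbb{Z}$-algebras (equivalent content in the abelian case). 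You also spell out two steps the paper leaves implicit: that the Artinian (respectively Min) factors and the top abelian factors trivially satisfy the weak minimal condition, and the Dedekind-modular-law induction that reassembles the weak minimal condition on the factors into $Min-\infty-N$ for $G$. The core finiteness mechanism (Artinian plus Noetherian plus finiteness of simple $\mathbb{Z}B$-modules) is the same in both arguments; your version trades the critical composition series for a direct chain argument and is more self-contained on the group-theoretic bookkeeping, while the paper's version is shorter because it delegates the decomposition to its earlier module-theoretic results.
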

\begin{proof} 
We consider $A$ as a ${ \mathbb{ Z} } B$-module. By \cite[Theorem 4.3]{Tush2003}, $A$ has a finite series of submodules each of whose factor is either Artenian or Noetherian. Then it is sufficient to show that any Noetherian 
${\mathbb{Z}}B$-module $M$ with $K(M) \le 1$ satisfies the weak condition 
of minimality  for submodules. Evidently the module $M$ has a finite series of cyclic submodules each of whose factor is either simple or 1-critical. Then it is sufficient to show that any cyclic 1-critical ${ \mathbb{ Z} } B$-module $N$  satisfies the weak condition of minimality for submodules. Since  the module $N$ is 1-critical, for any nonzero submodule $X \le N$ the quotient module $N/X$ is cyclic Artenian. By \cite[Theorem 4]{Hall1959} the group ring ${ \mathbb{ Z} } B$ is Noetherian and hence the quotient module $N/X$ is Noetherian. It follows from \cite[Theorem 3.1]{Hall1959} that any simple ${ \mathbb{ Z} } B$-module is finite and it implies that the quotient module $N/X$ is finite. Thus, the assertion follows.
\end{proof}
\begin{example}\label{ Example 3.7}
 There exists a torsion-free metabelian group $G$ with Krull dimension $K(G) = 1$ which is not minimax and does not satisfy the condition $Min - \infty  - N$. 
\end{example}
\begin{proof}
Let $F$ be an algebraically closed field of characteristic zero, $H$ be a quasicyclic  $p$-subgroup of ${F^*}$ and $g = \frac{1}{p}$. Let $R = { \mathbb{ Z} } [H \times \left\langle g \right\rangle ]$ then, by \cite [Lemma 3.2]{Tush2003}, $K(R) = 1$. The additive group ${R^ + } \cong  \oplus_{i = 1}^\infty {({{ \mathbb{ Q} }_p})_i}$ Therefore, ${R^ + }$ is not minimax and for any prime integer $q \ne p$ we have infinite descending chain 
$\{ R{q^n}\, |\, n \in {\mathbb{Z} }  \}$ of ideals with infinite sections. Then the split extension 
$G = R\leftthreetimes (H \times \left\langle g \right\rangle )$ has Krull dimension $K(G) = 1$, is not minimax and does not satisfy the condition $Min- \infty - N$. 
\end{proof}

The author is deeply grateful to Peter Krafuller for personal communication which stimulated the appearance of this article.


\begin{thebibliography}{99}%

\bibitem{GKR1} F. De Giovanni, L.A. Kurdachenko, A. Russo, Groups with pronormal deviation, J. Algebra 613 (2023) 32-45.
\bibitem{GKR2} F. De Giovanni, L.A. Kurdachenko, A. Russo, Groups with subnormal deviation, Mathematics 11 (2023) article n. 2635. 
\bibitem{DaMa2024}  U. Dardano, F. De Mari, A real chain condition for groups, J. Algebra 642 (2024) 451-469. 
\bibitem{Hall1959} Ph. Hall, Finiteness conditions for soluble groups, Proc. London Math. Soc. 4 (1954) 419-436.
\bibitem{HaMc1971} B. Hartley, D. McDougall, Injective modules and soluble groups satisfying the minimal condition for normal subgroups, Bull. Austral. Math. Soc. 4 (1971) 113-135.
\bibitem{JaKr2020} L. Jacoboni, P. Kropholler, Soluble groups with no  
${\mathbb{Z}} \wr {\mathbb{Z}}$-sections, Annales Henri Lebesgue 3 (2020) 981-998.
\bibitem{KuSm2009} L.A. Kurdachenko, H. Smith, Groups with small deviation for non-subnormal subgroups, Centr. Eur. J. Math. 7 (2009) 186-199.
\bibitem{McRo1988} J.C. McConnell, J.C. Robson, Noncommutative Noetherian rings, (John Wiley and sons. 1988). 
\bibitem{Sega1977} D. Segal, On the residual simplicity of certain modules, Proc. London Math. Soc. (3) 34 (1977) 327-353. 
\bibitem{Tush2002} A.V. Tushev, On Noetherian modules over minimax abelian groups, Ukr. Math. J. (7) 54  (2002) 1169-1180.
\bibitem{Tush2003} A. V. Tushev, On deviation in groups,  Illinois J. Math. 47 (2003) 539-550. 
\bibitem{Zait1980} D. I. Zaitsev, Products of abelian groups, Algebra Logika (2) 19 (1980) 150-172. 
\bibitem{ZaKT1985} D. I. Zaitsev, L. A. Kurdachenko, A. V. Tushev, Modules over nilpotent groups of finite rank, Algebra and Logika (6) 24 (1985) 631-666.

\end{thebibliography}
\end{document}